\documentclass[12pt]{article}
\usepackage{latexsym}
\usepackage{amsfonts}
\usepackage{amsmath}
\usepackage{amsmath,amssymb,latexsym}

\bibliographystyle{plain}
\makeindex

\newcommand{\forces}{\!\Vdash\!}
\newcommand{\imp}{\!\rightarrow\!}
\newcommand{\imps}{\!\rightarrow\!}

\newcommand{\pa}{{\sf PA}}
\newcommand{\gl}{{\sf GL}}
\newcommand{\gla}{{\sf GLA}}

\newcommand{\glacs}{{\sf GLA}_{\mbox{\it\tiny CS}}}

\newcommand{\glae}{{\sf GLA}_{\tiny\emptyset}}

\newcommand{\sfour}{{\sf S4}}

\newcommand{\lp}{{\sf LP}}

\newcommand{\ai}[1]{ {#1}^{\ast} }
\newcommand{\Ai}[1]{ ({#1})^{\ast} }

\newcommand{\proves}{\vdash}

\newcommand{\lc}[1]{#1\!\!:\!\!}

\newcommand{\co}[1]{! {#1} }

\newcommand{\Provable}[1]{ \hbox{\it Provable\/}({#1}) }
\newcommand{\Proof}[1]{ \hbox{\it Proof\/}({#1}) }





\newcommand{\PA}{{\sf PA}}



\newtheorem{Prop}{\bf Proposition}
\newenvironment{proposition}{\begin{Prop}\em }{\end{Prop}}
\newtheorem{Theor}{\bf Theorem}
\newenvironment{theorem}{\begin{Theor}\em }{\end{Theor}}
\newtheorem{Lemma}{\bf Lemma}

\newtheorem{Coro}{\bf Corollary}

\newtheorem{Fact}{\bf Fact.}

\newtheorem{Remark}{\bf Remark}

\newtheorem{Claim}[enumi]{Claim}

\newtheorem{defin}{\bf Definition}

\newtheorem{exam}{\bf Example}

\newtheorem{notat}{\bf Notation.}

\newenvironment{proof}{{\bf Proof.}}{\hfill $\slot$}
\newcommand{\slot}{\hfill \mbox{$\Box$}\vspace{\parskip}\\}
\newtheorem{Comment}{\bf Comment}



\begin{document}

\title{On Logic of Formal Provability and Explicit Proofs}

\author{Elena Nogina\thanks{Supported by PSC CUNY Research Awards program.} \\ \\
 {\small BMCC CUNY, Department of Mathematics}\\
{\small 199 Chambers Street, New York, NY 10007}\\
{\small {\tt E.Nogina@gmail.com}} }
\date{\empty}
\maketitle

\begin{abstract}  
In 1933, G\"odel considered two modal approaches to describing provability. One captured formal provability and resulted in the logic GL and Solovay's Completeness Theorem. The other was based on the modal logic S4 and led to Artemov's Logic of Proofs LP. In this paper, we study introduced by the author logic {\sf GLA}, which is a fusion of {\sf GL} and {\sf LP} in the union of their languages. {\sf GLA} is supplied with a Kripke-style semantics and the corresponding completeness theorem. Soundness and completeness of \gla\ with respect to the arithmetical provability semantics is established.  
\end{abstract}

\medskip\par
\section{Introduction}

G\"odel in \cite{God33} suggested a provability reading of modal logic {\sf S4}, which is axiomatized over the classical logic by the following list of postulates:
\medskip\par
$\Box(F\imp G)\imp(\Box F\imp \Box G)$ \hfill {\em Deductive Closure/Normality}\par
$\Box F \imp \Box\Box F$ \hfill {\em Positive Introspection/Transitivity}\par
$\Box F \imp F$ \hfill {\em Reflection}\par
\noindent
and the {\em Necessitation Rule}:  $\ \vdash F\ \Rightarrow\ \vdash \Box F$.
\medskip\par
G\"odel considered the interpretation of $\Box F$ as the formal provability predicate 
\[ \mbox{\emph{$F$ is provable in Peano Arithmetic $\PA$}} \]
and noticed that this semantics is inconsistent with $\sfour$.

Indeed, ~$\Box(\Box F\imps F)$ can be derived in ${\sf S4}$. On the other hand, interpreting~$\Box$ as the
predicate ``{\it Provable}" of formal provability in Peano Arithmetic $\PA$ and $F$ as {\it falsum} $\bot$,
converts this formula into the false statement that the consistency of $\PA$ is internally provable in $\PA$:
$$
\Provable{\mbox{\it Consis $\PA$}}.
$$
\subsection{Formal provability spills over to non-standard proofs} 

Let $\Proof{x,F}$ be a standard proof predicate  (cf. \cite{AB05,Boo93,Fef60}) $\mbox{\it $x$ is a proof for $F$};$ {\it Provable F} be $\exists x\Proof{x,F}$. 

Peano Arithmetic ${\sf PA}$ cannot distinguish between standard and nonstandard numbers; given $\exists x\Proof{x,F}$, $x$ may be a nonstandard number, hence not a code of any derivation in $\PA$. It means that $\mbox{\it Provable F}\ \imp F$ can fail in a model, and hence is not derivable in {\sf PA}. 

Indeed, consider a theory $ \mbox{{\sf T} = {\sf PA} + $\mbox{\it Provable}\ \bot$.} $ {\sf T} is consistent, since ${\sf PA}$ does not prove $\neg\mbox{\it Provable}\ \bot$. Hence {\sf T} has a model $M$ in which $\mbox{\it Provable}\ \bot$ holds, but $\bot$ does not.

So, the formal provability interpretation of $\sfour$ does not work; a provability calculus was left without a semantics and a provability semantics was left without a calculus thus opening two problems: 
\begin{enumerate}
\item Find a precise provability semantics for ${\sf S4}$;
\item 
Find a modal logic of formal provability {\it Provable}. 
\end{enumerate}

Problem 2 was solved in~1976 by Solovay \cite{Sol76}, who proved the completeness of G\"odel-L\"ob logic {\sf GL} with respect to the formal provability in arithmetic {\sf PA}.

In 1995, Problem 1 found its solution in Artemov's Logic of Proofs $\lp$ which provided a semantics of explicit proofs for {\sf S4} (\cite{Art95,Art01a}).

\subsection{G\"odel-L\"ob logic of formal provability}

Logic of Formal Provability ${\sf GL}$ (standing for G\"odel-L\"ob)  is given by the following list of postulates:
\begin{enumerate}
\item \textit{Axioms and rules of classical propositional logic} 
\item $\Box(F\!\imp\! G)\imp(\Box F\!\imp\!\Box G)$ \hfill {\it Deductive Closure/Normality}
\item $\Box F\!\imp\!\Box\Box F$ \hfill {\it Verification/Transitivity}
\item $\Box(\Box F\!\imp\!F)\imps\Box F$  \hfill {\it L\"ob Axiom}
\item \textit{Necessitation Rule}: $\ \ \ \ \ \ \ \ {\displaystyle \frac{\vdash F}{\proves\Box F}}$ 
\end{enumerate}
\medskip\par
Formal provability interpretation of a modal language is a mapping $\ast$ from the set of modal
formulas to the set of arithmetical sentences such that $\ast$ agrees with Boolean connectives and constants and 
\[ \Ai{\Box G}  = \mbox{\it Provable}\ \ai{G}.\]
{\bf Solovay's completeness theorem} (\cite{Boo93,Sol76}): 
\[ \mbox{\it $\gl\vdash F\ \ \ $ iff $\ \ \ $ for all formal provability interpretations $\ast$, $\ \PA\proves\ai{F}$.} \]

In 1938, G\"odel outlined a way to provide a provability semantics for {\sf S4} (\cite{God38}): modality there should be read explicitly as proof assertions $\lc{t}F$ interpreted as
\[ \mbox{\it t is a proof of F in Peano Arithmetic $\PA$}. \]
This G\"odel's suggestion was realized in Artemov's Logic of Proofs (\cite{Art95,Art01a}). 

\subsection{Artemov's Logic of Proofs}
Proof terms in {\sf LP} are built from constants and variables by two binary operations {\em application} ``$\cdot$" and {\it sum} ``$+$", and one unary operation {\em proof checker} ``$!$". Formulas of {\sf LP} are built as the usual propositional formulas with an additional formation rule: whenever $F$ is a formula and $t$ a proof terms, $\lc{t}F$ is a formula. 
\medskip\par
Axioms and rules of the Logic of Proofs $\lp$ are those of classical propositional logic plus axioms 
\medskip\par
\par $\lc{s}(F\imp G)\ \imp\ (\lc{t}F\imp \lc{[s\!\cdot\!t]}G)$ \hfill \emph{Application}
\par  $\lc{t}F\ \imp\ \lc{\co{t}}(\lc{t}F)$ \hfill \emph{Proof Checker}
\par $\lc{s}F\imp\lc{[s\!+\!t]}F$, $\ \ \lc{t}F\imp\lc{[s\!+\!t]}F$ \hfill \emph{Sum}
\par $\lc{t}F\imp F$ \hfill \emph{Explicit Reflection} 
\medskip\par\noindent
Each axiom $A$ is assumed internally provable, which is represented by formula $\lc{c}A$ where $c$ is a proof constant. The fundamental property of {\sf LP} is  given by Artemov's Realization Theorem (\cite{Art95,Art01a}): {\em for each theorem F of {\sf S4} one could recover a witness (proof term) to each occurrence of $\Box$ in F in such a way that the resulting formula $F^r$ is derivable in {\sf LP}}.
This theorem embeds {\sf S4} into {\sf LP}.  Further interpretation of {\sf LP} proof terms as formal proofs in {\sf PA} (\cite{Art95,Art01a}) provided a G\"odelian provability semantics for {\sf LP} and {\sf S4} and completed G\"odel's project of 1933. Nowadays, the Logic of Proofs has evolved into a general logical theory of justification \cite{AF11,AN05b,AN05c}.

\subsection{Comparing two G\"odel approaches to provability}

Logic of formal provability $\gl$ formalizes G\"odel's second incompleteness theorem $$\neg\Box(\neg\Box\bot),$$ L\"ob's
theorem $$\Box(\Box F\!\imp\!F)\imps\Box F,$$ and a number of other meaningful provability principles. 

Logic of Proofs $\lp$ represents proofs explicitly, naturally extends typed $\lambda$-calculus,
modal logic, and modal $\lambda$-calculus. 

$\gl$ and~$\sfour/\lp$ complement each other by addressing different areas of application.
$\gl$ finds applications in traditional proof theory.
$\lp$ targets areas of mathematical theories of knowledge and justification, foundations of verification,  typed theories and lambda-calculi, etc.

\subsection{Mixture of provability and explicit proofs}
Certain principles require a mixture of both provability and explicit
proofs. Consider the negative introspection principle. Its purely modal formulation $\neg\Box F\imp\Box\neg\Box F$ is not valid as a provability
principle. Indeed, let $F$ be $\bot$. Then $\neg\Box\bot$ reads as {\it Consis {\sf PA}} and the whole formula as 
\[ \mbox{\it Consis {\sf PA}}\imp\Provable{\mbox{\it Consis {\sf PA}}}, \] which is false, by G\"odel's Second
Incompleteness Theorem. 

There is no explicit negative introspection either.  The principle $\neg\lc{p}S\imp \lc{t}(\neg\lc{p}S),$
where $p$ and $t$ are proof terms and $S$ is a propositional variable, is not valid. Indeed, fix an
interpretation $\ast$ of $p$ and $t$ and the standard G\"odel proof predicate. There are infinitely many arithmetical
instances of $S$ for which the antecedent holds. Hence $\ai{t}$ should be a proof of infinitely many theorems, which is
impossible. However, the mixed language of proofs and provability fits this version of negative introspection: 
$$\neg\lc{p}F\imp \Box(\neg\lc{p}F)$$ is arithmetically provable, by $\Sigma$-completeness of $\PA$, according to which for each $\Sigma$-formula $\sigma$, $$\PA\proves\sigma \imp \mbox{\it Provable}\ {\sigma}.$$

We develop introduced in \cite{Nog06} a joint logic of formal provability and explicit proofs \gla\ (G\"odel-L\"ob-Art\"emov logic) in the language with provability assertions $\Box F$ and proof assertions $\lc{t}F$, find Kripke semantics for \gla\ and establish the arithmetical completeness of this logic.  

{\sf GLA} proved to be useful for applications in formal epistemology where it became a template for a family of epistemic logics with justifications (cf. \cite{AN05b,AN05c}). An elaborate proof theory of \gla\ and another version of Kripke models for \gla\ were offered by Kurokawa in \cite{Kur12,Kur13}.

\section{Description and basic properties of \gla}

The following two systems are predecessors of \gla: 
\begin{itemize}
\item
system ${\sf B}$ from \cite{Art94}, which does not have operations on proofs; 
\item
system ${\sf LPP}$  from \cite{Sid97a,Y_S01} in an extension of languages of the logic of formal provability \gl\  and the Logic of Proofs \lp.
\end{itemize}
Immediate successors of \gla\ are the logic {\sf GrzA} of strong provability and explicit proofs \cite{Nog09}, and symmetric logic of proofs and provability \cite{Nog10}.
\medskip\par\noindent
{\bf Language of \gla.}
\medskip\par\noindent
\emph{Proof terms} are built from \emph{proof variables} $x,y,z,\dots$ and \emph{proof constants}
$a,b,c,\dots$ by means of two binary operations: \emph{application}~`$\cdot$' and \emph{union}~`$+$', and one
unary \emph{proof checker}~`$!$'.

Formulas of {\sf GLA} are defined by the grammar
\[ A=S\mid A\imp A \mid A\wedge A \mid A\vee A \mid \neg A \mid \Box A \mid \lc{t}A\ ,\]
where $t$ stands for any proof term and $S$ for any sentence letter.

Axioms and rules of both G\"odel-L\"ob logic {\sf GL} and {\sf LP}, together with three specific principles
connecting explicit proofs with formal provability, constitute $ \sf \glae$.

\medskip\par\noindent
I. {\bf Axioms of classical propositional logic} \medskip\par
Standard axioms of the classical logic (e.g., {A1-A10} from \cite{Kle52})
\medskip\par\noindent
II. {\bf Axioms of Provability Logic {\sf GL}}\medskip\par
{\bf GL1} $\Box(F\imp G)\imp(\Box F\imp \Box G)$ \hfill {\em Deductive Closure/Normality}\par
{\bf GL2} $\Box F \imp \Box\Box F$ \hfill {\em Positive Introspection/Transitivity}\par
{\bf GL3} $\Box (\Box F \imp F)\imp \Box F$ \hfill {\em L\"ob Principle}\par
\medskip\par\noindent
III. {\bf Axioms of the Logic of Proofs $\lp$}
\medskip\par {\bf LP1} $\lc{s}(F\imp G)\ \imp\ (\lc{t}F\imp \lc{[s\!\cdot\!t]}G)$ \hfill
\emph{Application}
\par {\bf LP2} $\lc{t}F\ \imp\ \lc{\co{t}}(\lc{t}F)$ \hfill \emph{Proof Checker}
\par {\bf LP3} $\lc{s}F\imp\lc{[s\!+\!t]}F$, $\ \ \lc{t}F\imp\lc{[s\!+\!t]}F$ \hfill \emph{Sum}
\par {\bf LP4} $\lc{t}F\imp F$ \hfill \emph{Explicit Reflection}
\medskip\par\noindent
IV. {\bf Axioms connecting explicit and formal provability}
\medskip\par {\bf C1} $\lc{t}F\imp \Box F$ \hfill \emph{Explicit-Implicit connection} \par
{\bf C2} $\neg\lc{t}F\imp \Box\neg\lc{t}F$ \hfill \emph{Explicit-Implicit Negative Introspection} \par
{\bf C3} $\lc{t}\Box F\imp F$ \hfill \emph{Explicit-Implicit Reflection} \par
\medskip\par\noindent
V.  {\bf Rules of inference}
\par {\bf R1} $F\imp G,\ F\proves G$ \hfill \emph{Modus Ponens} \par
{\bf R2} $\vdash F\ \Rightarrow\ \vdash \Box F$ \hfill \emph{Necessitation}\par
{\bf R3} $\vdash \Box F \Rightarrow\ \vdash F$ \hfill \emph{Reflection Rule}\par
\medskip\par\noindent
A {\bf Constant Specification} ${C\!S}$ for $\gla$ is the set of formulas 
\[ \{\lc{c_1}A_1,\lc{c_2}A_2,\lc{c_3}A_3,\ldots\}, \]
where each $A_i$ is an axiom of $\glae$ and each $c_i$ is a proof constant.
$$\glacs\ = \glae\ + C\!S,$$
$$\gla\ = \glacs\ \mbox{with the ``total" \it$C\!S$}. $$

\begin{theorem}\label{internalizationtheorem}{\bf (Internalization Theorem)}. \\
\emph{If $\gla\proves F$ then for some proof term $p$, $\gla\proves\lc{p}F$.}
\end{theorem}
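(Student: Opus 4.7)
The plan is to prove this by induction on the length of a derivation of $F$ in $\gla$, following the standard internalization strategy for $\lp$ but paying careful attention to the two additional rules R2 (Necessitation) and R3 (Reflection Rule) that do not appear in pure $\lp$. The key observation is that, since $\gla$ uses the ``total'' constant specification, for every axiom $A$ of $\glae$ there is a proof constant $c$ such that $\gla \vdash \lc{c}A$; this takes care of all base cases uniformly.

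For the inductive step, I would handle each rule separately. If the last rule is R1 (Modus Ponens) with premises $F \imp G$ and $F$, then by the induction hypothesis there are terms $s,t$ with $\gla \vdash \lc{s}(F \imp G)$ and $\gla \vdash \lc{t}F$; axiom LP1 (Application) then yields $\gla \vdash \lc{[s \cdot t]}G$. The interesting cases are the two new rules. For R2, where the conclusion is $\Box F$, the induction hypothesis gives a term $p$ with $\gla \vdash \lc{p}F$. The trick is to use the axiom C1 in the instance $\lc{p}F \imp \Box F$: by total \emph{CS} there is a constant $c$ with $\gla \vdash \lc{c}(\lc{p}F \imp \Box F)$. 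Then LP2 (Proof Checker) gives $\gla \vdash \lc{!p}(\lc{p}F)$, and one application of LP1 produces $\gla \vdash \lc{[c \cdot !p]}\Box F$, as required.

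For R3, where the conclusion is $F$ derived from $\Box F$, the induction hypothesis gives a term $q$ with $\gla \vdash \lc{q}\Box F$. Here I would exploit axiom C3 in the instance $\lc{q}\Box F \imp F$: total \emph{CS} supplies a constant $c$ with $\gla \vdash \lc{c}(\lc{q}\Box F \imp F)$, LP2 gives $\gla \vdash \lc{!q}(\lc{q}\Box F)$, and LP1 then yields $\gla \vdash \lc{[c \cdot !q]}F$. Thus the two ``provability'' rules R2 and R3 are absorbed by the connection axioms C1 and C3, respectively, which is precisely why those axioms were built into $\gla$.

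I expect the main conceptual obstacle to be the R3 case: unlike in $\lp$, the rule R3 lets $\gla$ conclude an unboxed formula $F$ from $\Box F$, and at first glance it is not obvious that this move can be matched by an explicit proof of $F$. The key realization is that the premise $\Box F$ is itself, by the induction hypothesis, accompanied by an explicit witness $\lc{q}\Box F$, and axiom C3 is exactly the principle that converts such a witness into a derivation of $F$ — and, together with LP2 and a \emph{CS}-constant for C3, into an explicit proof of $F$. Once this is seen, all other cases are routine uses of Application, Sum, and the total \emph{CS}.
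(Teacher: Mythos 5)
Your proposal is correct and follows essentially the same route as the paper: base case via the total constant specification, Modus Ponens via LP1, and the rules R2 and R3 absorbed by internalizing C1 and C3 respectively using a \emph{CS}-constant, the proof checker LP2, and Application — yielding the same witnesses $c\cdot\,!p$ and $c\cdot\,!q$ that the paper writes schematically as $t(x)=a\cdot\,!x$ and $s(x)=b\cdot\,!x$. Your closing remark that the R3 case is exactly why C3 must be kept (with its own constants) matches the paper's own note on this point.
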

\begin{proof}
Induction on a derivation of $F$. \par
Base: $F$ is an axiom. Then use Constant Specification. In this case, $p$ is a proof constant. 

Induction steps: by internalized rules of $\gla$. 
\medskip\par
Internalization of {\it Modus Ponens} immediately follows from the Application axiom {\bf LP1}. 
\medskip\par
Internalization of Necessitation rule $\vdash F\ \Rightarrow\ \vdash \Box F$: \\
{\it For each $F$ there is $t(x)$ such that $\gla\vdash\lc{x}F\imp \lc{t(x)}\Box F$}
\medskip\par
1. $\lc{x}F\imp\Box F$ - axiom Explicit-Implicit Connection {\bf C1};\par
2. $\lc{a}(\lc{x}F\imp\Box F)$ - , from 1, by Constant Specification; \par
3. $\lc{x}F\imp\ \lc{!x}\lc{x}F$ - axiom Proof Checker {\bf LP2}; \par
4. $\lc{!x}\lc{x}F\imp\lc{(a\cdot !x)}\Box F$ - from 2, by Application {\bf LP1};\par
5. $\lc{x}F\imp\lc{(a\cdot !x)}\Box F$ - from 3,4, by propositional logic. 
\medskip\par\noindent
Now put $t(x)=a\cdot!x$. 
\medskip\par
Internalization of Reflection rule $\vdash \Box F \Rightarrow\ \vdash F$  \\
{\it For each $F$ there is $s(x)$ such that $\gla\vdash\lc{x}\Box F\imp \lc{s(x)}F$} 
\medskip\par
1. $\lc{x}\Box F\imp F$ - axiom Explicit-Implicit Reflection {\bf C3};\par
2. $\lc{b}(\lc{x}\Box F\imp F)$ from 1, by Constant Specification;\par
3. $\lc{x}\Box F\imp\lc{!x}\lc{x}\Box F$ - Proof Checker {\bf LP2}; \par
4. $\lc{!x}\lc{x}\Box F\imp\lc{(b\cdot!x)}F$ - from 2, by Application {\bf LP1}; \par
5. $\lc{x}\Box F\imp\lc{(b\cdot!x)}F$ - from 3,4, by propositional logic.
\medskip\par\noindent
Now put $s(x)=b\cdot!x$.  Note that in 2, we need an {\it internalized} Explicit-Implicit Reflection! 

\end{proof}

The list of postulated axioms and rules of \gla\ contains some principles which are derivable from the rest of the system. Such redundancies are generally acceptable to make exposition more readable. For example, in \gla\ (as well as in the Provability Logic \gl) the positive introspection axiom {\bf GL2} is derivable from the rest of the system (cf. \cite{Boo93}). In \gla\ the same holds for Reflection Axiom {\bf LP4}, Necessitation Rule {\bf R2} and Reflection Rule {\bf R3}. 
In all these cases we decide to postulate the corresponding principles for the sake of more concise definitions of important
subsystems of \gla.

Note that for any finite constant specification $C\!S$ the rule of necessitation is not redundant in $\glacs$ since to emulate {\bf R2} one needs an infinite constant specifications.

Here is an example of a yet more delicate dependency in  \gla: even though Explicit-Implicit Reflection Axiom {\bf C3} is derivable from
the rest of $\glae$ (Proposition~\ref{C3} below), proof constants corresponding to {\bf C3} are needed to guarantee the Internalization Property of \gla\
(cf. Theorem~\ref{internalizationtheorem}). Hence, we keep {\bf C3} as a basic postulate of \gla.
\begin{proposition}\label{C3} \emph{{\bf C3} is derivable from the rest of $\glae$.}
\end{proposition}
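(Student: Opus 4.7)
The plan is to derive C3 by first establishing $\vdash \Box(\lc{t}\Box F \imp F)$ in $\glae$ (without C3) and then invoking the Reflection Rule R3 to strip the outer $\Box$.

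For the preparatory work, I would observe that LP4 immediately gives $\vdash \lc{t}\Box F \imp \Box F$, so Necessitation (R2) together with GL1 (the K-axiom) yields $\vdash \Box\lc{t}\Box F \imp \Box\Box F$. Chaining LP2 with C1 produces the positive introspection $\vdash \lc{t}\Box F \imp \lc{!t}\lc{t}\Box F \imp \Box\lc{t}\Box F$, and hence $\vdash \lc{t}\Box F \imp \Box\Box F$, while C2 supplies the dual negative introspection $\vdash \neg\lc{t}\Box F \imp \Box\neg\lc{t}\Box F$. Together these show that $\lc{t}\Box F$ behaves as a provably $\Sigma$-like formula with respect to $\Box$, a fact I intend to exploit.

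For the principal step, $\vdash \Box(\lc{t}\Box F \imp F)$, I would use L\"ob's rule (a consequence of GL3 and R2): it suffices to derive $\vdash \Box(\lc{t}\Box F \imp F) \imp (\lc{t}\Box F \imp F)$. Assuming the two antecedents $\Box(\lc{t}\Box F \imp F)$ and $\lc{t}\Box F$, positive introspection yields $\Box\lc{t}\Box F$, and K applied to the first assumption delivers $\Box F$; the unboxed $F$ is then extracted by a second, interlocked use of L\"ob's principle on $F$ itself, using the already derived $\Box\Box F$ as the additional ingredient. Once $\vdash \Box(\lc{t}\Box F \imp F)$ is in hand, R3 produces $\vdash \lc{t}\Box F \imp F$, which is C3.

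The main obstacle is precisely this final extraction of the unboxed $F$: because $\Box F \imp F$ is not available as a theorem of $\glae$ minus C3, the derivation cannot simply strip a $\Box$ by propositional means. It must interlock the $\Sigma$-like structure of $\lc{t}\Box F$ (granted by LP2, C1, and C2) with L\"ob's principle in a single coordinated move, and the essential role of R3 at the very end reflects the fact that C3 is precisely the object-level counterpart of the Reflection meta-rule.
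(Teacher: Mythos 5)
Your endgame is the same as the paper's: establish $\proves\Box(\lc{t}\Box F\imp F)$ and then strip the outer box with the Reflection Rule {\bf R3}. But your route to $\Box(\lc{t}\Box F\imp F)$ has a genuine gap. You propose to get it via L\"ob's rule, which requires deriving $\Box(\lc{t}\Box F\imp F)\imp(\lc{t}\Box F\imp F)$; that is, from the hypotheses $\Box(\lc{t}\Box F\imp F)$ and $\lc{t}\Box F$ you must produce the \emph{unboxed} $F$. As you yourself note, the available machinery (positive introspection via {\bf LP2}+{\bf C1}, then {\bf GL1}) only delivers $\Box F$ and $\Box\Box F$. The promised ``second, interlocked use of L\"ob's principle on $F$ itself'' cannot close this gap: L\"ob's axiom $\Box(\Box F\imp F)\imp\Box F$ has a boxed consequent and never removes a box, and any passage from $\Box F$ to $F$ under hypotheses is exactly the reflection one is trying to prove. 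So the principal step of your argument is not carried out, and no amount of combining {\bf GL3} with the $\Sigma$-like behaviour of $\lc{t}\Box F$ will produce it in the form you describe.

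The paper avoids L\"ob entirely and instead obtains $\Box(\lc{t}\Box F\imp F)$ by a propositional case split on $\Box F\vee\neg\Box F$ at the outer level. In the case $\neg\Box F$, the contrapositive of {\bf LP4} (applied as $\lc{t}\Box F\imp\Box F$) gives $\neg\lc{t}\Box F$, and {\bf C2} internalizes this negative fact as $\Box\neg\lc{t}\Box F$, whence $\Box(\lc{t}\Box F\imp F)$ holds vacuously by normality. In the case $\Box F$, normality applied to the tautology $F\imp(\lc{t}\Box F\imp F)$ gives $\Box(\lc{t}\Box F\imp F)$ outright. Either way $\Box(\lc{t}\Box F\imp F)$ is a theorem, and {\bf R3} finishes. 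Your preparatory observations ({\bf C2} as negative introspection, {\bf LP2}+{\bf C1} as positive introspection) are the right ingredients, but the missing idea is this disjunction on $\Box F$, which lets the entire argument live inside the box rather than forcing you to extract an unboxed $F$.
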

\begin{proof} The following is a derivation of $\lc{t}\Box F\imp F$ in $\glae$ without {\bf C3}.
\begin{tabular}{lll}
&&\\
1. & $\neg\Box F\imp\neg\lc{t}\Box F$, & contrapositive of {\bf LP4}; \\
2. & $\neg\lc{t}\Box F\imp\Box(\neg\lc{t}\Box F)$, & axiom {\bf C2};\\
3. & $\Box(\neg\lc{t}\Box F)\imp\Box(\lc{t}\Box F\imp F)$, & by reasoning in {\sf GL};\\
4. & $\neg\Box F\imp\Box(\lc{t}\Box F\imp F)$, & from 1,2, and 3;\\
5. & $\Box F\imp\Box(\lc{t}\Box F\imp F)$, & by reasoning in {\sf GL};\\
6. & $\Box(\lc{t}\Box F\imp F)$, & from 4 and 5;\\
7. & $\lc{t}\Box F\imp F$, & by {\bf R3}.\\
\end{tabular}

\end{proof}

{\sf GLA} is closed under substitutions of proof terms for proof variables and formulas for propositional
variables, enjoys the deduction theorem, and contains both {\sf GL} and {\sf LP}.

\subsection{Some principles of \gla }
{\bf Positive Introspection:} $\ \ \gla\proves\lc{t}F\imp\Box\lc{t}F$\par
1. $\lc{t}F\imp\ \lc{!t}\lc{t}F$ - Proof Checker axiom {\bf LP2};\par
2. $\lc{!t}\lc{t}F\imp\Box\lc{t}F$ - Explicit-Implicit Connection axiom {\bf C1}; \par
3. $\lc{t}F\imp\Box\lc{t}F$ - from 1,2, by propositional logic. \\ \\
{\bf Stability of proof assertions:} $\ \ \gla\proves\Box\ \lc{t}F\vee\Box\neg\lc{t}F$ \par
4. $\neg\lc{t}F\imp\Box\neg\lc{t}F$ - Explicit-Implicit Negative Introspection {\bf C2}; \par
5. $\Box\ \lc{t}F\vee\Box\neg\lc{t}F$ - from 3,4, by propositional logic.\\ \\
{\bf Explicit version of L\"ob Principle}. 
In $\Box(\Box F\imp F)\imp \Box F$ both modalities of the depth 1 can be read explicitly as $$\lc{x}(\Box F\imp
F)\imp\lc{l(x)}F$$ for some proof term $l(x)$. Indeed, 
\medskip\par
1. $\lc{x}(\Box F\imp F)\imp \lc{t(x)}\Box(\Box F\imp F)$ - by Internalized Necessitation Rule; \par
2. $\lc{c}(\Box(\Box F\imp F)\imp \Box F)$ - from L\"ob Principle {\bf GL3} by Constant Specification; \par
3. $\lc{t(x)}\Box(\Box F\imp F)\imp \lc{(c\cdot t(x))}\Box F$ - from 1,2 by Application {\bf LP1}; \par
4. $\lc{(c\cdot t(x))}\Box F\imp\lc{s(c\cdot t(x))} F$ - by Internalized Reflection Rule; \par
5. $\lc{x}(\Box F\imp F)\imp\lc{s(c\cdot t(x))} F$ - from 1,3,4.
\\ \\ 
{\bf L\"ob Principle cannot be realized in full.} 
Suppose for some proof polynomials $u$ and $v$,
$$\gla\proves\lc{x}(\lc{u}\bot\imp\bot)\imp\lc{v}\bot,$$
 hence $\gla\proves\lc{x}(\lc{u}\bot\imp\bot)\imp \bot$ and so
$F=\neg\lc{x}(\lc{u}\bot\imp\bot)$ is derivable in $\gla$. Consider a $\gla$-derivable formula
$$G=\lc{c}(\lc{u}\bot\imp\bot).$$
 Let us perform a substitution $\tau=[c/x]$ to both $F$ and $G$.
Then $F$ becomes $\neg\lc{c}(\lc{\tau u}\bot\imp\bot)$ and $G$ yields $\lc{c}(\lc{\tau u}\bot\imp\bot)$,
which is impossible. 
\subsection{Realizable provability principles.}  A Franco Montagna's question which
theorems of $\gl$ are realizable in $\gla$, has been answered by Evan Goris in \cite{Gor07,Gor09}.

It follows from the realization theorem for $\lp$ that all formulas of $\gl\bigcap\sfour$ are
realizable in $\lp$, and the question was actually whether proof terms of $\gla$ were capable of realizing some
other modal theorems of $\gl$. Goris' Theorem yields that it is not the case.\\ \\
{\bf Theorem} \cite{Gor07,Gor09}. {\it Only those theorems of \gl\ are realizable in $\gla$ which are from $\sfour$. }

\section{Models for {\sf GLA}}\label{sectionmodels}
In this section, we build Kripke-style models for \gla, which were described in \cite{Nog07}. 

A \emph{frame} is a standard $\gl$-frame $(W,\prec,\mbox{\it root})$ with the root node {\it root},  where $W$ is a
non-empty set of \emph{possible worlds}, $\prec$ is a binary transitive and conversely well-founded
\emph{accessibility} relation on $W$ (a relation $\prec$ is conversely well-founded if any increasing chain
$a_1\prec a_2\prec a_3\prec\ldots$ is finite).

{\bf Possible evidence relation} (first considered by Mkrtychev and then by Fitting) is a relation $\cal E$ between proof terms and formulas such that the following {\it closure conditions} are met:
\\ \\
{\em Application}: ${\cal E}(s,F\imp G)$ and ${\cal E}(t,F)$ implies ${\cal E}(s\!\cdot\!t,G)$.\\
{\em Proof Checker}: ${\cal E}(t,F)$ implies ${\cal E}(!t,(\lc{t}F))$. \\
{\em Sum}: ${\cal E}(s,F)$ or ${\cal E}(t,F)$ implies ${\cal E}(s+t,F)$.
\\ 

{\bf Model} is a structure ${\cal M} = (W,\prec,\mbox{\it root},{\cal E},\ \forces\ )$; here $\ \forces\ $ is a
relation between worlds and formulas such that \\ \\
 1. $\forces$ respects Boolean connectives at each world \\
($u\forces F\wedge G$ iff $u\forces F$ and $u\forces G$;
$u\forces\neg F$ iff $u\not\forces F$, etc.); \\ \\
2. $u\forces \Box F$ iff $v\forces F$ for every $v\in W$ with $u\prec v$;\\ \\
3. $u\forces \lc{t}F\ \ \ \ $ iff $\ \ \ \ {\cal E}(t,F)$ and $v\forces F$ for every $v\in W$.
\medskip\par
Following Solovay, we define
$${\cal H}(F) = \{\Box G\imp G\mid\Box G\ \mbox{\em is a subformula of F}\};  $$
for a set of formulas $X$, 
$${\cal H}(X)=\bigcup_{F\in X} {\cal H}(F).$$
A model $\cal M$ is called \emph{$F$-sound} if $\mbox{\em root}\forces{\cal H}(F)$. For a set of formulas $X$, $\cal M$ is \emph{$X$-sound}  if $\cal M$ is $F$-sound for each $F\in X$.\\ \\
For a given constant specification $C\!S$, a model $\cal M$ is a \emph{$C\!S$-model} if $\cal M$ is $C\!S$-sound
and $C\!S$ holds in $\cal M$.

\begin{theorem}{\bf (Soundness)}~\label{soundness}  \emph{For any formula F and any constant specification CS, if F is derivable in $\glacs$ then F holds
in each F-sound $C\!S$-model.}
\end{theorem}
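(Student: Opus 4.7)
The plan is to proceed by induction on the length of a derivation of $F$ in $\glacs$, establishing in fact the stronger property that $F$ holds at \emph{every} world of any $F$-sound $CS$-model. Since the clause for $\lc{t}G$ is already global (demanding $v\forces G$ for every $v\in W$), global validity is a natural invariant here, and in a rooted $\gl$-frame it immediately specializes to truth at the root as required by the theorem statement.

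For the axioms, I would verify each directly. The classical propositional axioms, GL1, and GL2 are standard; GL3 uses the converse well-foundedness of $\prec$ via the familiar maximal-counterexample argument: if $u\forces\Box(\Box G\imp G)$ but $u\not\forces\Box G$, a $\prec$-maximal $w\succ u$ with $w\not\forces G$ simultaneously satisfies $\Box G$ (by maximality and transitivity) and $\Box G\imp G$ (inherited from $u$), yielding $w\forces G$, a contradiction. The axioms LP1--LP3 reduce to the closure conditions on ${\cal E}$ combined with the global clause for $\lc{t}G$, and LP4 is immediate from that clause. Axiom C1 turns the global forcing of $G$ (implied by $\lc{t}G$) into $\Box G$, while C2 uses that $\neg\lc{t}G$ is itself a global condition whose truth at one world entails its truth at every $\prec$-successor. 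The critical axiom is C3 = $\lc{t}\Box G\imp G$, where $F$-soundness enters essentially: writing $r$ for the root, if $r\forces \lc{t}\Box G$ then the global clause yields $r\forces\Box G$, and since $\Box G$ is a subformula of the axiom, $F$-soundness delivers $r\forces\Box G\imp G$, hence $r\forces G$; for any non-root $v$ reachable from $r$, $v\forces G$ follows directly from $r\forces\Box G$ by the semantics of $\Box$. Constant-specification instances $\lc{c_i}A_i\in CS$ are covered by the $CS$-model hypothesis.

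For the rules, Modus Ponens preserves the property pointwise. For Necessitation $\vdash F\Rightarrow\vdash\Box F$, the key observation is that ${\cal H}(F)\subseteq{\cal H}(\Box F)$, so every $\Box F$-sound $CS$-model is automatically $F$-sound; the inductive hypothesis then yields $F$ at every world, from which $\Box F$ at every world is immediate. The main obstacle is the Reflection Rule $\vdash\Box F\Rightarrow\vdash F$, because $F$-soundness does not in general entail $\Box F$-soundness, so the inductive hypothesis cannot be applied directly to the given model. Two natural strategies for clearing this hurdle are: (i) appeal to the admissibility of R3 in $\glacs$ (as indicated in the discussion following Theorem~\ref{internalizationtheorem}) and rewrite the derivation without R3, so the induction runs through only the previous cases; or (ii) construct from the given $F$-sound $CS$-model an auxiliary extension by prepending a fresh predecessor of $r$ that witnesses the required reflection $\Box F\imp F$, apply the inductive hypothesis to this enlarged $\Box F$-sound model, and transfer the conclusion back to the original root. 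Verifying that such an extension is still a $CS$-model and that the global $\lc{t}$-clause remains compatible with the augmented frame is the delicate technical heart of the R3 case.
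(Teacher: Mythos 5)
The paper does not actually supply a proof of Theorem~\ref{soundness}, so your proposal can only be judged on its own terms. The routine cases you list (the propositional and \gl\ axioms, \textbf{LP1}--\textbf{LP4}, \textbf{C1}, \textbf{C2}, the L\"ob axiom via a $\prec$-maximal counterexample, and Necessitation via ${\cal H}(F)\subseteq{\cal H}(\Box F)$) are handled correctly, and you are right that the genuinely hard points are \textbf{C3} and \textbf{R3}. But your treatment of \textbf{C3} contains a real error: you justify $r\forces\Box G\imp G$ by saying that ``$\Box G$ is a subformula of the axiom,'' whereas $F$-soundness only guarantees $r\forces\Box G\imp G$ for $\Box G$ a subformula of $F$, \emph{the theorem being derived} --- and an instance of \textbf{C3} occurring in a derivation of $F$ need not have its $\Box G$ among the subformulas of $F$. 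The same mismatch undermines the claim that ``Modus Ponens preserves the property pointwise'': the invariant ``$A$ holds at every world of every \emph{$A$-sound} $C\!S$-model'' is indexed by the formula itself, and a $B$-sound model need not be $A$-sound or $(A\imp B)$-sound for the premises of a Modus Ponens step. What your induction actually yields is the weaker statement that $F$ holds in every $C\!S$-model that is $G$-sound for \emph{every} formula $G$ occurring in the chosen derivation; reducing this to $F$-soundness (a subformula property in the spirit of the Solovay--Goldfarb characterization of {\sf GLS}) is the real content of the theorem and is absent from your argument.

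The \textbf{R3} case is likewise not discharged. Option (i) presupposes the admissibility of the Reflection Rule, which the paper asserts but does not prove and which is itself ordinarily obtained \emph{from} a soundness argument, so you risk circularity. Option (ii) requires the prepended root to force ${\cal H}(\Box F)$ together with the constant specification, and since the truth of each $\Box G$ at the new root is determined by the old model rather than freely chosen, it is not clear that such an extension always exists; you correctly flag this as the delicate heart of the matter, but flagging it is not resolving it. A workable repair is to prove the ``derivation-sound'' version by your induction (where \textbf{C3} and \textbf{R3} do go through, because the needed reflection instances are then part of the soundness hypothesis on the model) and to obtain the stated $F$-sound version by a separate argument, e.g.\ via the canonical-model construction behind Theorem~\ref{completenestheorem} or via the arithmetical semantics of Section~\ref{provabilitysemantics}.
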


\begin{theorem}\label{completenestheorem} {\bf (Completeness) }\emph{For any finite constant
specification $C\!S$ if F is not derivable in $\glacs$, then there is an F-sound $C\!S$-model with a finite frame where F does not hold.}
\end{theorem}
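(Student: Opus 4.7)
The plan is to combine Solovay's finite-model construction for $\gl$ with a Mkrtychev-style evidence construction for $\lp$. Fix $F$ with $\glacs\not\proves F$. The standard Solovay trick (iterated L\"ob together with the reflection rule {\bf R3}) yields that $\glacs \cup {\cal H}(F) \cup {\cal H}(C\!S) \cup \{\neg F\}$ is consistent, where ${\cal H}(C\!S) := \bigcup_{\lc{c}A\in C\!S}{\cal H}(\lc{c}A)$ is finite because $C\!S$ is. Form a finite adequate set $\Phi$ as the closure under subformulas and negation of $\{F\}\cup C\!S\cup{\cal H}(F)\cup{\cal H}(C\!S)$, enriched so that for every $\lc{t}G\in\Phi$ the auxiliaries $\neg\lc{t}G$, $\Box\lc{t}G$, $\Box\neg\lc{t}G$, and $G$ also lie in $\Phi$.

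Let $W$ be the set of all maximal $\glacs$-consistent $\Phi$-complete sets extending ${\cal H}(F)\cup{\cal H}(C\!S)$; pick $\mbox{\em root}\in W$ containing $\neg F$, and retain only worlds reachable from $\mbox{\em root}$. Define $u\prec v$ in the $\gl$-style: (a) whenever $\Box G\in u$, both $\Box G$ and $G$ belong to $v$, and (b) some $\Box H\in\Phi$ satisfies $\Box H\in v\setminus u$. Transitivity is immediate, and condition (b) together with finiteness of $\Phi$ yields conversely well-foundedness. Let $E(t,G)$ hold iff $\lc{t}G$ belongs to some world of $W$, and take $\forces$ to be defined by the three clauses of the semantics.

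The heart of the proof is the Truth Lemma ($G\in u$ iff $u\forces G$, for $G\in\Phi$). Atomic, Boolean and $\Box$-cases go through as in standard $\gl$-completeness: the Existence Lemma (if $\Box G\notin u$, there is a $v$ with $u\prec v$ and $G\notin v$) is obtained by extending $\{\Box H, H : \Box H\in u\}\cup\{\neg G,\Box G\}$ to a $\Phi$-complete $\glacs$-consistent set, consistency being secured through the L\"ob axiom {\bf GL3}. The $\lc{t}G$ case hinges on a \emph{Propagation Lemma}: $\lc{t}G$ belongs to some world of $W$ iff it belongs to every world of $W$. For the nontrivial direction, suppose $\lc{t}G\in u$ and pick a chain $\mbox{\em root}=u_0\prec u_1\prec\cdots\prec u_n=u$; if $\neg\lc{t}G\in u_i$ for some $i<n$, axiom {\bf C2} and maximality give $\Box\neg\lc{t}G\in u_i$, so clause~(a) forces $\neg\lc{t}G\in u_{i+1}$, and iteration contradicts $\lc{t}G\in u_n$. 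Hence $\lc{t}G\in\mbox{\em root}$; as shown under ``Positive Introspection'', $\glacs\proves\lc{t}G\imp\Box\lc{t}G$ (via {\bf C1} and {\bf LP2}), so $\Box\lc{t}G\in\mbox{\em root}$ by maximality, and clause~(a) combined with transitivity spreads $\lc{t}G$ to every world reachable from $\mbox{\em root}$. With propagation in hand: if $\lc{t}G\in u$ then $E(t,G)$ holds and {\bf LP4} together with propagation place $G$ in every world, so by induction $v\forces G$ for all $v\in W$, giving $u\forces\lc{t}G$; conversely, $u\forces\lc{t}G$ forces $E(t,G)$, so $\lc{t}G$ lies in some and therefore in every world, in particular $u$. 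Closure of $E$ under Application, Sum, and Proof Checker drops out of {\bf LP1}--{\bf LP3} and maximality.

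Once the Truth Lemma is established: $F$-soundness and $C\!S$-soundness follow from ${\cal H}(F)\cup{\cal H}(C\!S)\subseteq\mbox{\em root}$; each $\lc{c}A\in C\!S$, being a $\glacs$-axiom, lies in every $\Phi$-complete $\glacs$-consistent set, so $C\!S$ holds throughout the model; and $\mbox{\em root}\not\forces F$ since $\neg F\in\mbox{\em root}$. The principal technical obstacle, I expect, is the design of $\Phi$: it must be finite yet contain every auxiliary formula arising in the Propagation Lemma (the negations of proof assertions, their boxes, and the embedded $G$'s), while still permitting the $\gl$ Existence Lemma and the closure conditions on $E$ to go through after these enlargements.
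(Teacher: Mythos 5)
The paper itself only sketches this proof (a canonical-model construction ``with the use of technique developed by Solovay, Artemov, and Fitting''), so your proposal must stand on its own --- and its very first step fails. You claim that $\glacs\cup{\cal H}(F)\cup{\cal H}(C\!S)\cup\{\neg F\}$ is consistent and accordingly pick a root containing both $\neg F$ and ${\cal H}(F)$. This is false in general: take $F=\Box\bot\imp\bot$, which is not derivable in $\glacs$ (by arithmetical soundness it would yield consistency of $\pa$). Here ${\cal H}(F)$ contains $\Box\bot\imp\bot$, i.e.\ $F$ itself, so ${\cal H}(F)\cup\{\neg F\}$ is already propositionally inconsistent; the same happens for $F=\Box\Box p\imp p$, where ${\cal H}(F)=\{\Box\Box p\imp\Box p,\ \Box p\imp p\}$ entails $F$. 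The theorem does not assert that $F$ fails \emph{at the root} of an $F$-sound model --- it cannot, as these examples show --- but only that $F$ fails at \emph{some} world; the paper's own countermodel to $\Box\lc{x}P\imp P$ in Section~4.1 makes this explicit ($\mbox{\it IER}$ is refuted at node $2$, while the root, node $1$, is where the soundness condition holds). The argument therefore has to be two-staged: first build a finite countermodel in which $\neg F$ holds at some world $b$, with no ${\cal H}$-constraint imposed there, and then adjoin a fresh root beneath the whole structure and prove --- this needs its own lemma and is not automatic even if the new root copies the valuation of $b$ --- that ${\cal H}(F)\cup{\cal H}(C\!S)$ holds at it. Your proposal collapses these two stages into one inconsistent set of requirements on a single world.

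A second, independent problem is the evidence relation. You define $E(t,G)$ to hold iff $\lc{t}G$ belongs to some world, where worlds are subsets of a finite set $\Phi$; but the closure conditions force ${\cal E}$ to be infinite whenever it is nonempty (from ${\cal E}(t,G)$ one gets ${\cal E}(!t,\lc{t}G)$, ${\cal E}(!!t,\lc{!t}\lc{t}G)$, and so on, and Application and Sum likewise produce pairs whose formulas lie outside $\Phi$). No enlargement of a \emph{finite} $\Phi$ can repair this, so the difficulty you flag at the end (``the design of $\Phi$'') is not where the problem lives. The standard repair is to let ${\cal E}$ be the closure of $\{(t,G):\lc{t}G\in\mbox{some world}\}$ under the three conditions and then prove a conservativity claim: any pair $(t,G)$ with $\lc{t}G\in\Phi$ added by the closure is derivable via {\bf LP1}--{\bf LP3} from proof assertions present in every world, hence already lies in every world by maximal consistency --- otherwise the right-to-left direction of your Truth Lemma for $\lc{t}G$ breaks. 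Your Propagation Lemma itself (stability of proof assertions across worlds via {\bf C2}, {\bf C1} and {\bf LP2}) is the right idea and is sound as far as it goes.
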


Proof goes by a canonical model construction with the use of technique developed by Solovay \cite{Sol76}, Artemov \cite{Art94}, and Fitting \cite{Fit05}. $\glae$ exhibits some sort of a finite model property, which also yields the decidability of $\glacs$ for any given finite constant specification:

\begin{theorem} \emph{For any finite constant specification $C\!S$, the logic $\glacs$ is decidable.}
\end{theorem}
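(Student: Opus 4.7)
The plan is to combine the recursive axiomatizability of $\glacs$ with the finite model property delivered by Theorem~\ref{completenestheorem}. Concretely, I will show that both derivability and non-derivability in $\glacs$ are recursively enumerable, from which decidability follows.

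First, since $C\!S$ is finite, the axioms and rules of $\glacs$ form an effectively presented Hilbert-style system, so derivability of a formula $F$ in $\glacs$ is recursively enumerable in the standard way (enumerate proofs).

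Second, I would enumerate finite candidate models and effectively check whether any is an $F$-sound $C\!S$-model in which $F$ fails at the root. Given $F$, let $\Phi$ be the closure under subformulas of $\{F\}\cup {\cal H}(F)\cup C\!S$, and let $T$ be the set of proof terms occurring in $\Phi$ together with all their subterms. A finite frame $(W,\prec,\mbox{\it root})$, an assignment of propositional variables to each world, and a finite relation ${\cal E}_{0}\subseteq T\times\Phi$ suffice to compute the forcing of every formula in $\Phi$ at every world via the clauses defining $\forces$, since the clause for $\lc{t}G$ only queries ${\cal E}(t,G)$ when $\lc{t}G$ itself belongs to $\Phi$. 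The closure conditions on ${\cal E}$ (Application, Proof Checker, Sum) reduce to finitely many local constraints on ${\cal E}_{0}$ and are decidable. For each candidate, one then decides $F$-soundness, validity of $C\!S$, and whether $F$ fails at \textit{root}. By Theorem~\ref{completenestheorem}, $F$ is non-derivable iff some such candidate succeeds, making non-derivability recursively enumerable.

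The main obstacle is justifying Step Two carefully: one must argue that the truth of $F$ at the root really depends only on the finite fragment ${\cal E}_{0}$ of the evidence relation and that the three closure conditions can be honestly encoded as finitely many local checks, without an infinite supply of auxiliary terms producing pairs we are forced to consider. A cleaner route, extractable from the Solovay-Artemov-Fitting construction underlying Theorem~\ref{completenestheorem}, is to derive an explicit computable bound on $|W|$ in terms of $|F|$ and $|C\!S|$ (typically exponential in the number of subformulas). The enumeration then terminates at a prescribed stage, yielding not merely decidability but an elementary complexity bound. In either formulation, combining the two semi-decision procedures produces an algorithm for membership in $\glacs$.
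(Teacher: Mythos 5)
Your proposal is correct and follows essentially the same route as the paper, which simply observes that the finite model property exhibited by the completeness theorem (Theorem~\ref{completenestheorem}), combined with the recursive axiomatization, yields decidability for any finite $C\!S$. Your additional care about restricting the evidence relation $\cal E$ to a finite, subterm-closed fragment is a detail the paper leaves implicit, but it does not change the approach.
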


\section{Provability semantics for {\sf GLA}, completeness}\label{provabilitysemantics}

In what follows, all proof predicates are assumed \emph{normal} (\cite{Art01a}), i.e., satisfying two properties. 
\medskip\par
1. Finiteness of proofs. \par 
For every $k$ set $T(k)=\{\varphi\mid\Proof{k, \varphi}\}$ is finite, the function from
$k$ to $T(k)$ is computable.
\medskip\par
2. Conjoinability of proofs.  \par For any $k$ and $l$ there is $n$ such that
\[ T(k)\cup T(l)\subseteq T(n) .\]
Prime example: G\"odel's proof predicate. 
\medskip\par
{\bf Arithmetical interpretation of \gla} is the sum of the intended arithmetical interpretations for $\gl$ and $\lp$. In particular, 
\[ \Ai{\Box G}  =  \mbox{\it Provable}\ \ai{G}; \] 
\[ \Ai{\lc{p}F}\ \ =\ \ \Proof{\ai{p},\ai{F}}.\]
{
\begin{theorem}\label{asoundness} {\bf (Soundness of $\gla$ with respect to arithmetical provability)} \\ 
{\it For any Constant Specification {\it CS} and any arithmetical interpretation $\ast$ respecting {\it CS}, if $\glacs\proves F$ then $\pa\proves F^\ast$.}
\end{theorem}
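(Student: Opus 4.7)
The plan is to proceed by induction on the length of a $\glacs$-derivation of $F$, showing that $\pa\proves F^\ast$ for every axiom of $\glacs$ and that each rule of inference preserves the property of having a $\pa$-provable translation.

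For the axiom groups I--III we inherit the soundness arguments of the two classical subsystems. The classical propositional axioms are immediate. The $\gl$-axioms \textbf{GL1}--\textbf{GL3} are handled exactly as in Solovay's proof: $\pa$-normality of $\Provable{\cdot}$ for \textbf{GL1}, formalized $\Sigma$-completeness of $\Provable{\cdot}$ for \textbf{GL2}, and formalized L\"ob's theorem for \textbf{GL3}. The $\lp$-axioms go as in Artemov's arithmetical completeness for $\lp$: using the hypotheses that the proof predicate is normal (finiteness and conjoinability of proofs), one constructs primitive recursive operations on proof codes realizing $\cdot$, $+$, and $!$\,---application implements internal modus ponens, sum uses conjoinability, and the proof checker exploits $\Delta$-decidability of $\Proof{\cdot}$ together with $\Sigma$-completeness. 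For \textbf{LP4}, $\Proof{t^\ast,F^\ast}\imp F^\ast$ holds because for concrete numerals $\pa$ decides $\Proof{t^\ast,F^\ast}$, and when the antecedent is true the witnessing code is an actual $\pa$-proof of $F^\ast$.

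The connecting axioms are the main novelty. \textbf{C1}, $\Proof{t^\ast,F^\ast}\imp\Provable{F^\ast}$, is immediate from the definition $\Provable{G}=\exists x\,\Proof{x,G}$. For \textbf{C2}, the formula $\neg\Proof{t^\ast,F^\ast}$ is (equivalent to) a $\Sigma$-sentence, so formalized $\Sigma$-completeness gives $\pa\proves\neg\Proof{t^\ast,F^\ast}\imp\Provable{\neg\Proof{t^\ast,F^\ast}}$. For \textbf{C3}, the most economical route is to reuse the syntactic derivation given in the proof of Proposition~\ref{C3}: once soundness is verified for \textbf{C1}, \textbf{C2}, \textbf{LP4}, the $\gl$-axioms, and the rule \textbf{R3} (treated below), soundness of \textbf{C3} follows at no additional cost. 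The $CS$-formulas $\lc{c_i}A_i$ are handled by the hypothesis that $\ast$ respects $CS$: $c_i^\ast$ is then a specific numeral with $\Proof{c_i^\ast,A_i^\ast}$ true, so another appeal to $\Sigma$-completeness yields $\pa\proves(\lc{c_i}A_i)^\ast$.

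It remains to check the rules. Modus ponens is trivial. For necessitation \textbf{R2}, the induction hypothesis delivers $\pa\proves F^\ast$; extracting a numerical code $n$ for such a $\pa$-proof and applying $\Sigma$-completeness to $\Proof{\bar n,\gn{F^\ast}}$ yields $\pa\proves\Provable{F^\ast}$. The delicate case, and the principal obstacle, is the reflection rule \textbf{R3}: here the induction hypothesis supplies only $\pa\proves\Provable{F^\ast}$, from which $\pa\proves F^\ast$ cannot be inferred arithmetically, since by L\"ob's theorem $\pa\proves\Provable{F^\ast}\imp F^\ast$ fails unless $F^\ast$ is already a $\pa$-theorem. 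The way out is to handle \textbf{R3} logically rather than arithmetically, by proving it admissible in $\glacs$: whenever $\glacs\proves\Box F$, already $\glacs\proves F$. This admissibility is the standard semantic argument in $\gl$-style logics, adapted to $\glacs$ via Theorems~\ref{soundness} and~\ref{completenestheorem}: given an $F$-sound $CS$-model with root $r$ where $F$ fails, prefix a fresh root $r'$ with $r'\prec r$ (and, by transitivity, $r'\prec v$ for every $v$ above $r$); assigning valuations at $r'$ so that the reflection schemata $\Box G\imp G$ for $\Box G$ a subformula of $\Box F$ are forced produces a $\Box F$-sound $CS$-model in which $\Box F$ fails at $r'$, so by Theorem~\ref{soundness} $\glacs\not\proves\Box F$. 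With admissibility of \textbf{R3} in place, any use of it in a $\glacs$-derivation can be eliminated, and the induction closes.
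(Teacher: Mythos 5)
Your treatment of the axioms (\textbf{C1} and \textbf{C2} via $\Sigma$-completeness, \textbf{C3} by reusing the derivation of Proposition~\ref{C3}, the $CS$-instances via $\Sigma$-completeness, and the $\gl$/$\lp$ groups via Solovay and Artemov) matches the paper's proof, which disposes of exactly these cases, only more tersely. The problem is your handling of the reflection rule \textbf{R3}, which rests on a misdiagnosis. You claim that from $\pa\proves\Provable{\ai{F}}$ one ``cannot infer $\pa\proves\ai{F}$ arithmetically'' because $\pa\proves\Provable{\ai{F}}\imp\ai{F}$ fails in general. But the inference needed is metatheoretic, not an internal implication: since $\pa$ is $\Sigma_1$-sound, $\pa\proves\Provable{\ai{F}}$ entails that $\Provable{\ai{F}}$ is \emph{true}, i.e., there is an actual $\pa$-derivation of $\ai{F}$, hence $\pa\proves\ai{F}$. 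This one-line argument is precisely how the paper validates \textbf{R3} (``if \emph{Provable F} is derivable in $\pa$, then \emph{Provable F} is true, hence $F$ is provable''); L\"ob's theorem is irrelevant because no formalized reflection is being invoked.

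Your substitute --- proving \textbf{R3} admissible in $\glacs$ by a Kripke-model argument --- is both unnecessary and gappy as stated. First, Theorem~\ref{completenestheorem} is asserted only for \emph{finite} constant specifications, whereas the soundness theorem quantifies over arbitrary $CS$; you would need an extra step (restricting to the finitely many $CS$-instances occurring in the given derivation) that you do not supply. Second, in these models the clause for proof assertions is global: $u\forces\lc{t}G$ requires $v\forces G$ for \emph{every} $v\in W$. Adjoining a fresh root therefore changes the truth values of the formulas $\lc{t}G$ throughout the model, so it is not automatic that the extended structure is still a $CS$-model, still $\Box F$-sound, or that a valuation at the new root can be chosen forcing all the required reflection instances; none of this is checked. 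The repair is simply to drop the detour and argue \textbf{R3} as the paper does.
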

\begin{proof}
It is immediate that Reflection Rule is valid: 
if {\it Provable F} is derivable in $\PA$, then {\it Provable F} is true hence $F$ is provable. 

Validity of C1 and C2 immediately follows from $\Sigma$-comple\-te\-ness of $\PA$. 

Soundness of Explicit-Implicit Reflection takes place since  $\lc{t}\Box F\imp F$ is derivable from other principles of {\sf GLA}, which is already proved sound. 

\end{proof}
Arithmetical completeness of $\glae$ could be established following arithmetical completeness proofs from \cite{Art94,Art95,Art01a} (cf. also  \cite{Y_S01}). 

\begin{theorem}\label{arithmeticalcompleteness} {\bf (Arithmetic completeness)} \emph{For any finite constant specification $C\!S$, if $\glacs\not\proves
F$, then there exists a $C\!S$-interpretation $\ast$ such that $\pa\not\proves\ai{F}$.}
\end{theorem}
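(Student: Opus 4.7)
The plan is to combine a Solovay-style embedding for the provability fragment with an Artemov-style arithmetical realization for the proof-term fragment, following the template of \cite{Art94,Art95,Art01a,Y_S01}. I begin by invoking Theorem~\ref{completenestheorem}: since $\glacs\not\vdash F$, there is a finite $F$-sound $C\!S$-model ${\cal M}=(W,\prec,\mathit{root},{\cal E},\forces)$ with $\mathit{root}\not\forces F$. I adjoin a new bottom node $0$ with $0\prec w$ for every $w\in W$, and extend the forcing at $0$ so that $0\forces{\cal H}(F)\cup C\!S$ while $0\not\forces F$. This yields an expanded finite $\gl$-frame $\{0,1,\ldots,N\}$ on which $F$ still fails and the reflection subformulas in ${\cal H}(F)$ together with $C\!S$ hold at the new root.

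Next, apply Solovay's fixed-point construction to the expanded frame to produce arithmetical sentences $S_0,S_1,\ldots,S_N$ with the standard Solovay properties: pairwise inconsistent, exhausting, $\PA\vdash S_i\imp\neg\Pr{\neg S_j}$ whenever $i\prec j$, $\PA\vdash S_i\imp\Pr{\bigvee_{i\prec j}S_j}$, and $S_0$ true in the standard model. For each propositional variable $p$ of $F$ set $\ai{p}=\bigvee_{w\forces p}S_w$, extended to Booleans and $\Box$ as in Solovay's argument.

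The new ingredient is the interpretation of proof terms. For each proof variable $x$ in $F$ and each proof constant $c$ used by $C\!S$, I would assign a natural-number proof code $\ai{x}$, $\ai{c}$, and interpret application, sum, and proof checker by primitive recursive functions on codes that respect the closure conditions of ${\cal E}$. The codes are chosen so that for every subformula $\lc{t}G$ of $F$ the following dichotomy is realized: if ${\cal E}(t,G)$ and $v\forces G$ for every world $v$ in the expanded frame, then $\PA\vdash\Proof{\ai{t},\ai{G}}$; otherwise $\neg\Proof{\ai{t},\ai{G}}$ is a true $\Pi_1$ statement that is compatible with $S_w$ at each world $w\not\forces\lc{t}G$. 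The positive cases are handled by bundling the required theorems into the finite $T(\ai{x})$ supplied by a normal proof predicate; the negative cases follow from the finiteness of $T(\cdot)$ combined with $\Sigma$-completeness. Constants from $C\!S$ must be assigned codes that actually prove the arithmetical translations of the axioms they internalize, which is possible because those axioms are $\PA$-theorems.

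With this setup one proves by induction on a subformula $G$ of $F$ the key lemma: for every world $w$, ${\cal M},w\forces G$ iff $\PA\vdash S_w\imp\ai{G}$. Boolean cases are routine, the $\Box$ case uses the Solovay properties of the $S_i$, and the $\lc{t}G$ case uses the construction above. Applying this at the new external root $0$, where $F$ fails, yields $\PA\vdash S_0\imp\neg\ai{F}$; since $S_0$ is true in the standard model of arithmetic, $\ai{F}$ is false there and hence $\PA\not\vdash\ai{F}$ by soundness. The main obstacle will be the proof-term construction: one must simultaneously make the positive evidence instances true in $\PA$ by exhibiting actual proof codes, keep the negative instances $\Pi_1$-false so that the Solovay embedding propagates their failure to the appropriate worlds, and ensure closure under all operations on proof terms so that the interpretation remains consistent with ${\cal E}$. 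This interleaving of the Solovay and Artemov constructions is where the completeness proof for the joint logic genuinely goes beyond the separate proofs for $\gl$ and $\lp$.
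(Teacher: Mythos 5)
Your outline follows essentially the same route the paper intends: its own proof of Theorem~\ref{arithmeticalcompleteness} is a one-line reduction to the arithmetical completeness of $\glae$, which the paper says ``could be established following'' the constructions of \cite{Art94,Art95,Art01a,Y_S01} --- that is, exactly the interleaved Solovay fixed-point embedding plus proof-term realization over a finite $F$-sound $C\!S$-countermodel that you sketch. The step you flag as the main obstacle (the simultaneous arithmetical fixed-point definition of a normal proof predicate and a term interpretation consistent with ${\cal E}$, making the positive evidence instances provably true and the negative ones provably false at the right worlds) is indeed the substantive content of the argument; the paper does not reproduce it either, deferring entirely to the cited works.
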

\begin{proof} The claim of the theorem follows from the arithmetical completeness of $\glae$. 
\end{proof}

\subsection{Explicit-Implicit  Reflection vs. Implicit-Explicit Reflection}

Explicit-Implicit Reflection $\lc{x}\Box F\imp F$, as we have seen in Theorem~\ref{asoundness}, is arithmetically valid. However, the Implicit-Explicit Reflection
\[ \mbox{\it IER}\ =\ \Box\lc{x}P\imp P \]
is not a provable principle. 

1. A proof via \gla. 

It suffices to establish that IER is not derivable in $\glae$. For this we will use an appropriate Kripke model. Take
$$\mbox{\it $W=\{1,2\}$, $1\prec 2$, $P$ is false at $1$ and $2$, ${\cal E}(t,F)$ is always false.}$$
$$\begin{array}{ll}
2 & \ \ \ \ \ \neg P,\ \neg\lc{x}P,\ \Box\lc{x}P,\ \neg(\Box\lc{x}P\imp P)\ \ \mbox{(i.e., $\neg\mbox{\it IER}$)} \\
\uparrow & \\
1 & \ \ \ \ \ \neg P,\ \neg\lc{x}P,\ \neg\Box\lc{x}P,\ \Box\lc{x}P\imp\lc{x}P\ \ \mbox{({\it IER}-soundness)}
\end{array}
$$ 
Therefore, IER is false at node 2 of the model. 

2.  An arithmetical proof. 

If $P=\bot$, then $\lc{x}P$ is provably equivalent to $\bot$. Therefore, this instance of {\it IER} is equivalent to $\Box\bot\imp\bot$, which is the consistency statement, not provable in \PA.}

For other reflection principles of \pa\ see our paper \cite{Nog14b}.

\section{Acknowledgements}
The author is grateful to Sergei Artemov, Melvin Fitting, Evan Goris, Gerhard J\"ager, Makoto Kikuchi, Taishi Kurahashi, Hidenori Kurokawa, Franco Montagna, Anil Nerode, Thomas Strahm, Thomas Studer, Tatiana Yavorskaya, Junhua Yu, Ren-June Wang, logic groups in Bern University, Nihon University in Tokyo, Kobe  University, Academia Sinica and National Chung Cheng University of Taiwan for useful discussions.

\end{document}